\colorlet{shadecolor}{blue!15}
\newtheorem{theorem}{Theorem}
\newtheorem{claim}{Claim}
\newtheorem*{claim*}{Claim}
\newtheorem{proposition}[theorem]{Proposition}
\newtheorem{remark}[theorem]{Remark}
\newcommand{\bbN}{\mathbb{N}}
\newcommand{\bbR}{\mathbb{R}}
\newcommand{\bbZ}{\mathbb{Z}}
\numberwithin{equation}{section}
\newcommand{\rk}[1]{\bgroup\color{red}%
  \par\medskip\hrule\smallskip%
  \noindent\textbf{#1}%
  \par\smallskip\hrule\medskip\egroup}
\title{A note on continuity of magnetization at criticality for the ferromagnetic Ising model on amenable quasi-transitive graphs with exponential growth}
\author{Aran Raoufi}
\date{\today}
\begin{document}
\maketitle
 
 \begin{abstract}
The purpose of this modest note is to point out that the proof of the recent result of Huchcroft \cite{hutchcroft2016critical} concerning continuity of phase transition in Bernoulli percolation is applicable to the setting of the Ising model with free boundary condition. This observation, together with a recent result of Aizenman, Duminil-Copin, and Sidoravicius \cite{AizDumSid15} implies that  the Ising model on  amenable quasi-transitive  graph with exponential growth undergoes a second order phase transition. 
 \end{abstract}
 \section{Introduction} 
Let $G=(V(G),E(G))$ be a countable locally finite graph. Let $(J_{xy})_{x,y\in V(G)}$ be a family of nonnegative real numbers which are invariant under automorphisms of G, that is for any automorphism $\tau$ of $G$, $J_{\tau(x) \tau(y)} = J_{xy}$.
For $h \in \bbR$, the ferromagnetic Ising model on a finite subset $\Lambda \subset V(G)$ is defined by the Hamiltonian
$$H_{\Lambda,h}(\sigma) = -\sum_{x,y \in \Lambda} J_{xy} \sigma_x \sigma_y  -  h \sum_{x \in \Lambda} \sigma_x $$
for any $\sigma \in \{-1, +1\}^\Lambda$. 

For $\beta \in [0, \infty)$, define the Ising measure on $\Lambda$ magnetic field $h$ at inverse temperature $\beta$ to be the measure $ \mu_{\Lambda, \beta, h}$ defined on configurations $\sigma \in  \{-1, +1\}^\Lambda$ by 
$$ \mu_{\Lambda, \beta, h} (\sigma) = \frac {\exp (-\beta H_{\Lambda,h}(\sigma))} {Z(\Lambda, \beta,h)},$$
where $Z(\Lambda, \beta, h)$ is a normalizing constant defined in such a way that the total mass of the measure is equal to one. 

Let $(\Lambda_n)_{n \geq 1}$ be a sequence of nested finite subgraphs exhausting $G$. Define the Ising measure on $G$ at inverse temperature $\beta$ with external field $h$ to be the weak limit of measures $\mu_{\Lambda_n,\beta, h}$, and denote this measure by $\mu_{G, \beta, h}$. When $h=0$ we denote the measure by $\mu_{G, \beta}^0$. Let $\mu_{G, \beta}^+$ denote the measure which is the weak limit of the measures $\mu_{G,\beta, h}$ as $h \rightarrow 0^+$. $\mu_{G, \beta}^0$ (resp. $\mu_{G, \beta}^+$) is called Ising measure with free (res. plus) boundary condition.

Fix a vertex $x \in V(G)$. The quantity $\mu_{G, \beta}^+ (\sigma_x)$ is of special interest and is called spontaneous magnetization. The critical point of the model is defined as
$$ \beta_c := \inf \{ \beta: \mu_{G, \beta}^+ (\sigma_x) > 0 \}.$$
It is easy to see that the value of $\beta_c$ is independent from the choice of the vertex $x$. 

Whether spontaneous magnetization is continuous in $\beta$ or not is a natural mathematical and physical question.
Determining continuity of magnetization on $\bbZ^d$ has a long-standing history. 
The right continuity of spontaneous magnetization could be easily shown by simple semi-continuity arguments. (See Claim 3 of Proposition \ref{propj} below for an example of this type of arguments.) Therefore, the main question is left continuity of the spontaneous magnetization.

In dimension $d=2$, Yang \cite{Yan52}, inspired by works of Onsager \cite{ONS44} and Kaufmann \cite{kaufman1949crystal}, computed the spontaneous magnetization for the nearest neighbor case and continuity for all $\beta$ was established. 
In dimension $d\geq 4$ continuity at $\beta_c$ was proved in \cite{AizFer86} at criticality for nearest neighbor case and more generally reflection positive models. 
(See also \cite{sakai2007lace}.) 
In \cite{bodineau2006translation}, the continuity of magnetization was settled for any $\beta> \beta_c$ for $d\geq 3$. Finally, in \cite{AizDumSid15} the continuity was established in $d=3$ at criticality for nearest neighbor and reflection positive models.

Apart from $\bbZ^d$, this question has been studied on regular trees in \cite{haggstrom1996random} where continuity at $\beta_c$ is proved. (See also \cite{bleher1995purity} and references therein.) Not much is known about continuity of magnetization for general graphs.

Before stating our main theorem we recall the definitions of amenability and exponential growth for graphs. 
Let $G$ be  a countable locally finite graph. Let $d(.,.)$ denote the graph distance on $G$.
For a vertex $x \in V(G)$, define $\Lambda_n(x) := \{ y \in V(G): d(x,y) \leq n\}.$
Fix a vertex $x \in V(G)$. We say $G$ has \emph{exponential growth} if $$\liminf_{n \rightarrow \infty} \vert \Lambda_n(x) \vert ^{1/n} > 1.$$
It is easy to see that the above definition is independent of the choice of the vertex $x$. For a subset $A \subset V(G)$ define $\partial A := \{ x \in A : \exists y \in G \setminus A, \{x, y \} \in E(G) \}$. 
We say $G$ is \emph{amenable} if 
$$\inf_{A \subset G, \: \vert A \vert < \infty} \frac{\vert \partial A \vert}{\vert A \vert} = 0.$$

\begin{theorem} \label{thm:main}
Let $G$ be amenable quasi-transitive graph with exponential growth. 
The magnetization is continuous in $\beta$ at $\beta_c$ i.e., for any $x \in G$,
$$\mu_{G,\beta_c}^+ (\sigma_x) = 0.$$
\end{theorem}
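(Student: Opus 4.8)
The plan is to pass to the $q=2$ random-cluster (FK) representation, run Hutchcroft's argument there, and then invoke the result of Aizenman--Duminil-Copin--Sidoravicius to return to the magnetization. First I would record the Edwards--Sokal correspondence between the ferromagnetic Ising model with couplings $(J_{xy})$ and the $q=2$ random-cluster model with edge parameters $p_{xy}=1-e^{-2\beta J_{xy}}$. Under this coupling the free-boundary Ising measure $\mu^0_{G,\beta}$ corresponds to the free random-cluster measure $\phi^0_{G,\beta}$, the plus-boundary measure $\mu^+_{G,\beta}$ to the wired measure $\phi^1_{G,\beta}$, and the spontaneous magnetization becomes a connection probability,
$$\mu^+_{G,\beta}(\sigma_x) = \phi^1_{G,\beta}(x \leftrightarrow \infty).$$
Thus Theorem \ref{thm:main} is equivalent to the absence of an infinite cluster at criticality, $\phi^1_{G,\beta_c}(x\leftrightarrow\infty)=0$. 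The FK model retains exactly the structural tools needed below: positive association (FKG), a finite-energy property for $0<p_{xy}<1$, and monotonicity in the boundary condition, $\phi^0_{G,\beta}\preceq\phi^1_{G,\beta}$.

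\textbf{Hutchcroft's argument on the free measure.} Hutchcroft's proof for Bernoulli percolation is organised around an order parameter $M(\beta,h)$ obtained by adjoining a ghost field of intensity $h$, for which one establishes a differential inequality and then feeds in exponential volume growth to force the order parameter to vanish at the critical parameter. The observation I would exploit is that for the Ising model the ghost field is not an artificial device: it is the genuine external field $h$ in the Hamiltonian $H_{\Lambda,h}$, and $M(\beta,h)$ is literally the magnetization $\mu_{G,\beta,h}(\sigma_x)$. I would therefore rerun Hutchcroft's scheme with $\phi^0_{G,\beta}$ in place of Bernoulli percolation: re-derive the analogue of his differential inequality for $M(\beta,h)$ using only FKG and finite energy, and then invoke the hypothesis $\liminf_{n}|\Lambda_n(x)|^{1/n}>1$ to deduce
$$\phi^0_{G,\beta_c}(x\leftrightarrow\infty)=0,\qquad\text{equivalently}\qquad \langle\sigma_x\sigma_y\rangle^0_{\beta_c}\longrightarrow 0 \text{ as } d(x,y)\to\infty.$$

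\textbf{From free to plus via ADS.} It remains to upgrade this free-boundary statement to the wired one, that is, to rule out $\phi^1_{G,\beta_c}(x\leftrightarrow\infty)>0$ even though $\phi^0_{G,\beta_c}$ carries no infinite cluster (recall only $\phi^0\preceq\phi^1$ is automatic, so the free bound does not transfer for free). This is where amenability and the ADS input enter. Using the random-current representation and the switching lemma, one relates the spontaneous magnetization to free-boundary connectivities, so that the decay of the free two-point function at $\beta_c$ obtained above forces $\mu^+_{G,\beta_c}(\sigma_x)=0$; amenability supplies the uniqueness of the infinite cluster and the coincidence of the free and wired states that make this bridge effective on a quasi-transitive graph. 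Combining the two inputs yields $\mu^+_{G,\beta_c}(\sigma_x)=0$ and hence the theorem.

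\textbf{Main obstacle.} I expect the principal difficulty to be the faithful transfer of Hutchcroft's differential inequality, written for independent percolation, to the dependent FK-Ising measure: every step that uses independence (the Russo-type derivative formulas and the ghost-field/OSSS estimates) must be recast in a form valid under mere positive association and finite energy. The second delicate point is the passage from free to wired precisely at $\beta_c$, where in general the two states may fail to coincide; controlling this is exactly the content of the ADS random-current bound and the reason amenability is assumed rather than an arbitrary graph hypothesis.
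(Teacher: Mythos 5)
There is a genuine gap, and it sits exactly where you flagged your ``main obstacle'': the first half of your argument is never actually carried out, and it is based on a mischaracterization of the cited proof. The argument of \cite{hutchcroft2016critical} that this paper adapts contains no ghost field, no differential inequality, and no OSSS/Russo-type derivative formula; it is a soft argument that the paper runs directly on spin correlations, with no FK detour. Concretely (Proposition \ref{propj}): set $\kappa_\beta(n)=\min\{\mu^0_\beta(\sigma_x\sigma_y):y\in\Lambda_n(x)\}$; Griffiths' second inequality (playing the role Harris--FKG plays for percolation) gives supermultiplicativity; Fekete's lemma gives $\sup_n\kappa_\beta(n)^{1/n}=\lim_n\kappa_\beta(n)^{1/n}$; finite susceptibility for $\beta<\beta_c$ (\cite{AizBarFer87}, \cite{duminil2015new}) combined with $\liminf_n|\Lambda_n|^{1/n}>1$ bounds this quantity by some $\rho<1$ uniformly in $\beta<\beta_c$; and left continuity of $\beta\mapsto\sup_n\kappa_\beta(n)^{1/n}$ (a supremum of increasing continuous finite-volume correlations) pushes the bound to $\beta_c$. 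Your plan to ``re-derive the analogue of his differential inequality for $M(\beta,h)$ using only FKG and finite energy'' replaces this with a placeholder: no such inequality is supplied, the transfer of Russo/OSSS machinery to the dependent FK measure is itself a substantial separate theorem, and nothing in your sketch explains how exponential growth would be fed into such an inequality to force the critical order parameter to vanish. As written, the entire quantitative content of the first half is missing.

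A second, more subtle discrepancy: even if your scheme worked, the conclusion actually available from the Hutchcroft-style argument is only decay of the \emph{minimum} of the two-point function over $\Lambda_n(x)$, not uniform decay or absence of an infinite cluster for $\phi^0_{\beta_c}$ as you assert. Your claimed equivalence between $\phi^0_{G,\beta_c}(x\leftrightarrow\infty)=0$ and $\langle\sigma_x\sigma_y\rangle^0_{\beta_c}\to 0$ is fine in itself, but extracting either statement from min-decay requires an extra uniqueness-of-the-infinite-cluster argument that you do not give. The paper sidesteps this entirely: from Proposition \ref{propj} it builds sparse sets $K_n=\{x_1,\dots,x_n\}$ with $d(x_1,x_n)=k^n$ and $\mu^0_{\beta_c}(\sigma_{x_1}\sigma_{x_n})\le\rho^{k^n}$, controls all pairs via the Griffiths bound $\mu^0_{\beta_c}(\sigma_{x_i}\sigma_{x_j})\le\mu^0_{\beta_c}(\sigma_{x_1}\sigma_{x_j})/\mu^0_{\beta_c}(\sigma_{x_1}\sigma_{x_i})$, and thereby verifies hypothesis \eqref{eq:C2} of Theorem \ref{thm:adcs} directly. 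Your second half --- invoking the random-current result of Aizenman--Duminil-Copin--Sidoravicius, with amenability entering through a Burton--Keane uniqueness argument --- does match the paper's structure, though note that Theorem \ref{thm:adcs} does not proceed via ``coincidence of the free and wired states''; that coincidence at $\beta_c$ is a consequence of the conclusion, not an input to it.
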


Note that the only assumption on coupling constants is their invariance under automorphisms of the graph. We write the proof for transitive graphs. The proof works the same for quasi-transitive graphs (the proof of Proposition \ref{propj} follows almost the same line, the proof of \cite{duminil2015new} works the same for quasi-transitive graphs, and the setting of Theorem \ref{thm:adcs} can be lifted to amenable quasi-transitive graphs).

\paragraph{Notation.} From now on, we fix $G=(V(G), E(G))$ a countable locally finite transitive amenable graph with exponential growth. We drop $G$ from the notation. 
 
\section{Proof of Theorem 1}
We first rewrite the beautiful proof of Hutchcroft \cite{hutchcroft2016critical} in the case of Ising model with free boundary condition. 
\begin{proposition} \label{propj}
Fix $x \in V(G)$. There exists $\rho<1$ such that for all $n\geq 0$,
\begin{equation} \label{eq.prop}
\min \{ \mu_{\beta_c}^0 (\sigma_x \sigma_y) : y \in \Lambda_n(x)  \} \leq \rho ^n .
\end{equation}
\end{proposition}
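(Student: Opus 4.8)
The plan is to reduce the pointwise estimate to a bound on a finite-volume susceptibility and then cash in the exponential growth of the graph. The starting observation is elementary but decisive: the minimum of a finite family is at most its average, so
\begin{equation*}
\min\{\mu_{\beta_c}^0(\sigma_x\sigma_y):y\in\Lambda_n(x)\}\ \le\ \frac{1}{|\Lambda_n(x)|}\sum_{y\in\Lambda_n(x)}\mu_{\beta_c}^0(\sigma_x\sigma_y)\ =\ \frac{\chi_n}{|\Lambda_n(x)|},
\end{equation*}
where $\chi_n:=\sum_{y\in\Lambda_n(x)}\mu_{\beta_c}^0(\sigma_x\sigma_y)$ is the susceptibility in the ball. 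By the exponential growth hypothesis there is $\lambda>1$ with $|\Lambda_n(x)|\ge\lambda^n$ for all large $n$. Hence it suffices to prove that $\chi_n$ grows at a strictly smaller exponential rate than the volume, namely $\limsup_n\chi_n^{1/n}<\lambda$: the displayed ratio is then $\le\rho^n$ for some $\rho<1$ and all large $n$, and the finitely many small values of $n$ are harmless since the left-hand side is always at most $1$, equals $1$ at $n=0$, and is strictly less than $1$ for each $n\ge1$. I emphasize that this first move uses only the pigeonhole inequality and exponential growth; all the probabilistic content is pushed into the susceptibility bound.

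To control the growth of $\chi_n$ I would invoke the Duminil--Copin--Tassion machinery for the free Ising model \cite{duminil2015new}. Recall the quantity $\varphi_\beta(S)$ attached to a finite set $S\ni x$, together with two facts: the Simon--Lieb-type inequality, which for $z\notin S$ bounds $\mu_\beta^0(\sigma_x\sigma_z)$ by a boundary sum $\sum \tanh(\beta J_{yw})\,\langle\sigma_x\sigma_y\rangle_S^{\mathrm{free}}\,\mu_\beta^0(\sigma_w\sigma_z)$ over edges $(y,w)$ leaving $S$; and the criticality characterization $\varphi_{\beta_c}(S)\ge1$ for every finite $S\ni x$ (for otherwise some $S$ would satisfy $\varphi_\beta(S)<1$ for $\beta$ slightly above $\beta_c$, forcing exponential decay there and contradicting the definition of $\beta_c$). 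The governing heuristic is that of a critical branching process: the boundary sum redistributes a total mass $\varphi_{\beta_c}(\Lambda_r(x))\ge1$ across the sphere of radius $r$, so iterating Simon--Lieb along concentric shells keeps the sphere-sums $s_n=\sum_{d(x,y)=n}\mu_{\beta_c}^0(\sigma_x\sigma_y)$, and hence $\chi_n=\sum_{k\le n}s_k$, from growing at the volume rate. This is exactly what happens on a regular tree, where the sphere-sums stay bounded while the spheres themselves have size $\asymp\lambda^n$, and it is this boundedness (or at least sub-$\lambda$ growth) that I must establish in general.

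The main obstacle is precisely this comparison: showing that at $\beta_c$ the susceptibility $\chi_n$ grows at an exponential rate strictly below the volume rate $\lambda$. This is the only place the exponential growth can enter, and it must enter, since on $\mathbb{Z}^d$ (polynomial growth) the statement is false---there $\chi_n$ and $|\Lambda_n(x)|$ grow at the same polynomial rate and the two-point function decays only polynomially. Concretely I would combine $\varphi_{\beta_c}(S)\ge1$ with a randomized-algorithm (OSSS) differential inequality for the finite-volume two-point observables of the kind used in the sharpness proofs, integrated over a short temperature interval below $\beta_c$: the exponential abundance of vertices in $\Lambda_n(x)$ forces the revealment of any single vertex to be small, which in turn forces the $\beta$-derivative of the relevant observable to be large unless $\chi_n$ is exponentially smaller than $|\Lambda_n(x)|$, and criticality excludes the former. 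Converting this heuristic into the clean inequality $\limsup_n\chi_n^{1/n}<\lambda$, with enough uniformity to yield a single $\rho<1$ valid for all $n\ge0$, is the crux and the step I expect to demand the most care.
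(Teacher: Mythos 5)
Your opening move is the same as the paper's inequality at the start of its Claim 2 (the minimum is at most the average), but you apply it directly at $\beta_c$, and this forces you onto the statement $\limsup_n \chi_n(\beta_c)^{1/n} < \lambda$, i.e.\ that the \emph{critical} susceptibility restricted to balls grows at a rate strictly below the volume. That is the entire content of the proposition, and your proposal never proves it: the Simon--Lieb/OSSS discussion in your last two paragraphs is, as you yourself say, a heuristic, and it is not clear it can be completed. The known inputs (Aizenman--Barsky--Fern\'andez, Duminil-Copin--Tassion) give finiteness of the susceptibility $\chi(\beta)=\sum_y \mu_\beta^0(\sigma_x\sigma_y)$ only for $\beta<\beta_c$, with no uniformity as $\beta\uparrow\beta_c$ --- indeed $\chi(\beta)$ is expected to diverge at $\beta_c$ --- so no naive limiting argument transports your bound to the critical point, and the criticality characterization $\varphi_{\beta_c}(S)\ge 1$ pushes in the wrong direction for an upper bound on $\chi_n$. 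So there is a genuine gap exactly at the step you flag as the crux.

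The paper circumvents the critical susceptibility entirely, and the two ingredients it uses are the ones missing from your proposal. First, by Griffiths' inequality the quantity $\kappa_\beta(n)=\min\{\mu_\beta^0(\sigma_x\sigma_y):y\in\Lambda_n(x)\}$ is supermultiplicative, so Fekete's lemma gives $\sup_n \kappa_\beta(n)^{1/n}=\lim_n \kappa_\beta(n)^{1/n}$. Running your pigeonhole bound at \emph{subcritical} $\beta$, where $\chi(\beta)<\infty$, then yields $\sup_n \kappa_\beta(n)^{1/n}\le \lim_n\bigl(\chi(\beta)/|\Lambda_n|\bigr)^{1/n}=\lim_n |\Lambda_n|^{-1/n}=:\rho<1$; crucially this $\rho$ is \emph{uniform over} $\beta<\beta_c$, because $\chi(\beta)^{1/n}\to 1$ no matter how large the finite value $\chi(\beta)$ is --- the divergence of the susceptibility as $\beta\uparrow\beta_c$ is rendered harmless at the level of exponential rates. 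Second, $\beta\mapsto\sup_n \kappa_\beta(n)^{1/n}$ is left continuous (it is a supremum of minima of finitely many increasing continuous finite-volume correlations), so the uniform subcritical bound passes to $\beta_c$, giving $\kappa_{\beta_c}(n)\le\rho^n$ for all $n$. Note that Fekete is what makes this transfer work: left continuity of a rate valid for \emph{every} $n$ requires the $\sup_n$ form, not just the limit. If you want to salvage your write-up, replace the attempted critical susceptibility bound by this supermultiplicativity-plus-left-continuity scheme; your first paragraph then becomes Claim 2 of the paper, and no Simon--Lieb or OSSS input is needed.
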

\begin{proof}
Define 
$$\kappa_\beta(n) :=  \min \: \{ \mu_\beta^0 (\sigma_x \sigma_y) : y \in \Lambda_n(x)  \}.$$
The proof consists of three claims.

\begin{claim}
The sequence $(\kappa_\beta(n))_{n\geq 0}$ is supermultiplicative.
\end{claim}
\begin{proof} [Proof of Claim 1.]
Let $y \in \Lambda_{m+n}(x)$, there exists a vertex $z \in \Lambda_n(x)$ such that $y \in \Lambda_m(z)$. Griffiths' inequality \cite{Gri67} implies
$$\mu_\beta ^0 (\sigma_x \sigma_y) = \mu_\beta ^0 (\sigma_x \sigma_z \sigma_z \sigma_y) \geq  \mu_\beta ^0 (\sigma_x \sigma_z) \mu_\beta ^0(\sigma_z \sigma_y) \geq \kappa_\beta (n) \kappa_\beta (m).  $$
Thus,
$$ \kappa_\beta(n+m) =  \min \left\lbrace \mu_\beta ^0 (\sigma_x \sigma_y),  y \in \Lambda_{n+m}(x) \right\rbrace \geq \kappa_\beta(n) \kappa_\beta(m).$$
\end{proof}

\begin{claim}
There exists $\rho<1$, such that for any $\beta < \beta_c,$ 
$$\sup_{n\geq 0} \left( \kappa_\beta (n) \right)^{1/n}  < \rho.$$
\end{claim}
\begin{proof}[Proof of Claim 2.]
Based on the definition of $\kappa_\beta (n)$,
\begin{equation}\label{eq.1} 
\kappa_\beta (n)  \cdot \vert \Lambda_n \vert \leq \sum_{y \in \Lambda_n(x)} \mu_{\beta}^0 (\sigma_x \sigma_y) \leq \sum_{y \in V(G)} \mu_\beta ^0 (\sigma_x \sigma_y).
\end{equation}
An important ingredient here is the following theorem.
\begin{theorem}[\cite{AizBarFer87}, \cite{duminil2015new}] \label{finitesus}
Let $G$ be a transitive graph and $x \in V(G)$. For any $\beta < \beta_c$, 
$$\sum_{y \in V(G)} \mu_\beta ^0 (\sigma_x \sigma_y) < \infty.$$
\end{theorem}

As  $\kappa_\beta(n)$ is supermultiplicative, Fekete's lemma implies that $\lim_{n \rightarrow \infty} \left( \kappa_\beta (n) \right)^{1/n}$ exists and is equal to $\sup_n \left( \kappa_\beta (n) \right)^{1/n}$. Combining this fact with \eqref{eq.1} gives us
$$\sup_n \left( \kappa_\beta (n) \right)^{1/n} =  \lim_{n \rightarrow \infty} \left( \kappa_\beta (n) \right)^{1/n} \leq \lim_{n \rightarrow \infty} ( \frac{\sum_{y \in V(G)} \mu_\beta^0 (\sigma_x \sigma_y) }{\vert \Lambda_n \vert}) ^{1/n} =  \lim_{n \rightarrow \infty} \left( \frac{1}{\vert \Lambda_n \vert} \right) ^{1/n}.$$
As the graph has exponential growth, $\lim_{n \rightarrow \infty} ( \frac{1}{\vert \Lambda_n \vert}) ^{1/n}< 1$, and the claim follows. Note that this is the only place where exponential growth of the graph is used.
\end{proof}

\begin{claim}
The map $\beta \rightarrow \sup_n \left( \kappa_\beta (n) \right)^{1/n}$ is left continuous at $\beta \in [0, \infty)$.
\end{claim}
\begin{proof}[Proof of Claim 3.]
By Griffiths' inequality, $\mu_\beta^0(\sigma_x \sigma_y) = \sup_k \mu_{\Lambda_k, \beta, 0}^0(\sigma_x \sigma_y)$. 
Since $\Lambda_k$ is finite, and because of Griffiths' inequality,  $\beta \rightarrow \mu_{ \Lambda_k, \beta, 0}^0(\sigma_x \sigma_y)$ is continuous and increasing. The supremum of increasing continuous functions is left continuous. 
Hence for fixed $x,y \in V(G)$ the map $\beta \rightarrow \mu_\beta^0(\sigma_x \sigma_y)$ is left continuous. 

Now fix $n \in \bbN$. Since $\Lambda_n$ is finite, $\kappa_\beta(n)$ is the minimum of finitely many left continuous increasing functions, so is left continuous and increasing in $\beta$. Finally, the map $\beta \rightarrow \sup_n (\kappa_\beta(n) )^{1/n}$ is the supremum of left continuous increasing functions, so is left continuous. 
\end{proof}
Claim 2 and Claim 3 together conclude the proof. Indeed, $\sup_n (\kappa_\beta(n))^{1/n}$ is uniformly bounded above by some $\rho<1$ when $\beta < \beta_c$, and since $\beta \rightarrow \sup_n (\kappa_\beta(n))^{1/n}$ is left continuous, it follows that $\sup_n (\kappa_{\beta_c} (n))^{1/n} \leq \rho$. Hence for any $n \geq 1$, $\kappa_{\beta_c} (n) \leq \rho^n$ which is the claim.
\end{proof}

Recently Aizenman, Duminil-Copin, and Sidoravicius proved the following theorem, which establish a connection between $\mu_{\beta}^+$ and $\mu_{\beta}^0$. Their approach is based on the random current representation of the Ising model.
\begin{theorem}[\cite{AizDumSid15}] \label{thm:adcs}
Let $G$ be an amenable transitive graph, and let $\beta \in [0, \infty)$. If 
\begin{equation} \label{eq:C2}
\inf_{K \subset V(G), \: \vert K \vert < \infty}  \frac{\sum_{x,y \in K}  \mu_\beta ^0 (\sigma_x \sigma_y)}{\vert K \vert^2} = 0,
\end{equation}
then for any $x \in V(G)$, $\mu_{\beta}^+ (\sigma_x) = 0$.
\end{theorem}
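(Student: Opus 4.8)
My plan is to reduce the theorem to a percolation statement about the \emph{random current representation} and then to exploit amenability through uniqueness of the infinite cluster. Concretely, it suffices to prove that for every finite $K\subset V(G)$
$$\bigl(\mu_\beta^+(\sigma_x)\bigr)^{4}\;\leq\;\frac{\sum_{u,v\in K}\mu_\beta^0(\sigma_u\sigma_v)}{|K|^2},$$
because hypothesis \eqref{eq:C2} then forces the nonnegative quantity $\mu_\beta^+(\sigma_x)$ to vanish. To set things up I represent the model by currents $\mathbf n\colon E\to\bbZ_{\geq0}$ with the usual weights, encode a field $h>0$ by a ghost vertex $\mathfrak{g}$, and write $\mathbf P_\beta$ for the law of the sum $\mathbf n_1+\mathbf n_2$ of two independent sourceless currents (built in infinite volume as the limit along $(\Lambda_n)$ with free boundary conditions). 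The switching lemma provides the two identities I will use. First, for vertices $u,v$,
$$\bigl(\mu_\beta^0(\sigma_u\sigma_v)\bigr)^2=\mathbf P_\beta[u\leftrightarrow v],$$
where $\{u\leftrightarrow v\}$ is the event that $u$ and $v$ lie in the same cluster of $\mathbf n_1+\mathbf n_2$. Second, applying the same identity to $u=x$ and $v=\mathfrak{g}$, using $\sigma_x=\sigma_x\sigma_{\mathfrak{g}}$, and letting $h\to0^+$ turns the connection to the ghost into a connection to infinity, giving
$$\bigl(\mu_\beta^+(\sigma_x)\bigr)^2=\mathbf P_\beta[x\leftrightarrow\infty]=:\theta .$$

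The core of the argument is a second-moment estimate for the cardinality $N:=\#\{u\in K:u\leftrightarrow\infty\}$ of the trace of the infinite cluster on $K$. Here amenability enters, and it enters only once: by a Burton--Keane type argument $\mathbf P_\beta$ has almost surely at most one infinite cluster. Uniqueness yields the inclusion $\{u\leftrightarrow\infty\}\cap\{v\leftrightarrow\infty\}\subseteq\{u\leftrightarrow v\}$, since two vertices that both reach infinity must sit in the same (unique) infinite cluster; hence $\mathbf P_\beta[u\leftrightarrow\infty,\,v\leftrightarrow\infty]\leq\mathbf P_\beta[u\leftrightarrow v]$. Combining this with the Cauchy--Schwarz (Jensen) inequality $\mathbf E_\beta[N^2]\geq(\mathbf E_\beta[N])^2$ and the transitivity identity $\mathbf E_\beta[N]=\sum_{u\in K}\theta=\theta\,|K|$ gives
$$\theta^2|K|^2\leq\mathbf E_\beta[N^2]=\sum_{u,v\in K}\mathbf P_\beta[u\leftrightarrow\infty,\,v\leftrightarrow\infty]\leq\sum_{u,v\in K}\mathbf P_\beta[u\leftrightarrow v]=\sum_{u,v\in K}\bigl(\mu_\beta^0(\sigma_u\sigma_v)\bigr)^2 .$$
Finally, since $0\leq\mu_\beta^0(\sigma_u\sigma_v)\leq1$ we have $(\mu_\beta^0(\sigma_u\sigma_v))^2\leq\mu_\beta^0(\sigma_u\sigma_v)$, so the right-hand side is at most $\sum_{u,v\in K}\mu_\beta^0(\sigma_u\sigma_v)$. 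Dividing by $|K|^2$ and recalling $\theta=(\mu_\beta^+(\sigma_x))^2$ yields exactly the reduction inequality, and the proof is complete.

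The main obstacle is not this final combinatorics but the two current identities, which I have quoted as black boxes. Justifying them rigorously in infinite volume---especially the identification $(\mu_\beta^+(\sigma_x))^2=\mathbf P_\beta[x\leftrightarrow\infty]$, which demands careful control of how finite clusters attach to the ghost as $h\downarrow0$ together with the existence of the infinite-volume current measure along $(\Lambda_n)$---is the substantial analytic input. A second genuinely delicate point is the uniqueness of the infinite cluster: because currents obey parity constraints, the finite-energy property underlying Burton--Keane is not automatic for $\mathbf P_\beta$ and must be verified before amenability can be brought to bear. These two ingredients are the heart of \cite{AizDumSid15}.
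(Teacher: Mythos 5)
Your proposal is correct and follows essentially the same route as the paper, which states this theorem as a quotation from \cite{AizDumSid15} and, in the accompanying remark, describes exactly your strategy: amenability is used once, through a Burton--Keane argument giving uniqueness of the infinite cluster of the doubled sourceless current, and the vanishing long-range-order hypothesis of \cite{AizDumSid15} is weakened to \eqref{eq:C2} by precisely your second-moment/Cauchy--Schwarz computation (using $a^2\leq a$ for $a\in[0,1]$). The two ingredients you flag as black boxes---the infinite-volume switching identities, in particular $(\mu_\beta^+(\sigma_x))^2=\mathbf{P}_\beta[x\leftrightarrow\infty]$, and the insertion-tolerance needed to run Burton--Keane despite the parity constraints on currents---are indeed the substantive content of \cite{AizDumSid15}, so deferring them to that reference is appropriate.
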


\begin{remark}
Theorem \ref{thm:adcs} is not stated in the above form the in \cite{AizDumSid15}. There, it is stated that  on $\bbZ^d$ if the Long Range Order parameter vanishes, then spontaneous magnetization is also $0$. However, their proof works for any amenable graph and the condition of vanishing magnetization could be weakened to \ref{eq:C2}. It is worth highlighting that amenability has a vital importance in the argument of \cite{AizDumSid15} to obtain uniqueness of infinite volume random current infinite cluster via a Burton-Keane type argument.
\end{remark}

\begin{proof}[Proof of Theorem \ref{thm:main}]
Theorem \ref{thm:adcs} implies that in order to conclude the proof of Theorem \ref{thm:main} it is enough to construct a family $\{K_n\}_{n \geq 1}$ of finite subsets of $V(G)$, such that
$$\inf_n  \frac{\sum_{x,y \in K_n}  \mu_{\beta_c} ^0 (\sigma_x \sigma_y)}{\vert K_n \vert ^2} = 0.$$

Let $c= \min_{\{ x,y\} \in E(G)} \mu_{\beta_c}^0(\sigma_x \sigma_y)$. Griffiths' inequality implies that that for any $x , y \in V(G)$, $\mu_{\beta_c}^0(\sigma_x \sigma_y) \geq c^{d(x,y)}$. 
Choose $k\geq 2$ an integer such that $c \geq \rho ^k$, Where $\rho$ is the same constant as of \eqref{eq.prop}.
Fix a vertex $x_1 \in V(G)$, and for each $n>1$, define $x_n \in V(G)$ such that $d(x_n, x_1) = k^n$ and 
$$ \mu_{\beta_c}^0(\sigma_{x_1} \sigma_{x_n}) \leq \rho ^{k^n},$$
Proposition \ref{propj} guarantees the existence of $x_n$. Let $K_n = \{x_i: 1 \leq i  \leq n \}$. For $1 < i < j \leq n$, Griffiths' inequality implies
\begin{align*}
\mu_{\beta_c}^0(\sigma_{x_i} \sigma_{x_j}) 
\leq \frac{\mu_{\beta_c}^0(\sigma_{x_1} \sigma_{x_j})}{\mu_{\beta_c}^0(\sigma_{x_1} \sigma_{x_i})}
\leq \frac{\rho ^{k^j}}{c ^{k^i}} 
\leq \rho ^{k^j-k^{i+1}}
\leq \rho ^{j-i},
\end{align*}
for $j-i$ large enough. This implies that there exists a constant $C $ independent of $n$ such that,
\begin{align*}
 \frac{\sum_{x,y \in K_n}  \mu_{\beta_c} ^0 (\sigma_x \sigma_y)}{\vert K_n \vert ^2} 
\leq \frac{C \vert K_n \vert   }{\vert K_n \vert ^2}  
\leq \frac{C}{ n}.
\end{align*}
Hence $\inf_{n \geq 1}  \frac{\sum_{x,y \in K_n}  \mu_\beta ^0 (\sigma_x \sigma_y)}{\vert K_n \vert^2} = 0$.
\end{proof}

\paragraph{Acknowledgments} The author is thankful to Hugo Duminil-Copin for reading the manuscript.
This research was supported by the NCCR SwissMAP, the ERC AG COMPASP, and the Swiss NSF.

\bibliographystyle{alpha}
\bibliography{biblicomplete}
\small\begin{flushright}
\textsc{D\'epartement de Math\'ematiques}\\
 \textsc{Universit\'e de Gen\`eve} \\
\textsc{Gen\`eve, Switzerland} \\
\textsc{E-mail:} \texttt{aran.raoufi@unige.ch}
\end{flushright}
\end{document}